\renewenvironment{proof}[1][\proofname]{\par
  \pushQED{\qed}
  \normalfont \partopsep=\z@skip \topsep=\z@skip
  \trivlist
  \item[\hskip\labelsep
    \itshape
  #1\@addpunct{.}]\ignorespaces
}{
  \popQED\endtrivlist\@endpefalse\vspace{0.3em}
}
\newtheorem{prop}{Proposition}
\newcommand{\N}{\mathbb{N}}
\newcommand{\Pali}{\mathbb{P}}
\newcommand{\withbox}[2]{\mbox{\rlap{#1}}\hphantom{#2}}
\renewcommand{\epsilon}{\varepsilon}
\renewcommand{\phi}{\varphi}
\renewcommand{\rho}{\varrho}
\begin{document}

\title[On a conjecture of John Hoffman regarding sums of palindromic numbers]{On a conjecture of John Hoffman regarding \break sums of palindromic numbers}
\author{Markus Sigg}
\address{Freiburg, Germany}
\email{mail@markussigg.de}
\date{\today}

\begin{abstract}
  We disprove the conjecture that every sufficiently large natural number $n$
  is the sum of three palindromic natural numbers where one of them can be
  chosen to be the largest or second largest palindromic natural number smaller
  than or equal to $n$.
\end{abstract}

\maketitle

{
  \small
  Keywords: palindromic number.\\
  AMS subjects classification 2010: 11B13.
}

\section{Introduction}

In the following, the terms \emph{digit} and \emph{palindromic} refer to
decimal representations. For $n \in \N$, its unique decimal representation is
given by
\begin{equation*}
  n = \sum_{j=0}^{h(n)} n_j \cdot 10^j.
\end{equation*}
with minimal $h(n) \in \N$ and digits $n_0,\dots,n_{h(n)} \in
\{0,\dots,9\}$. We identify $n$ with the digit string $n_{h(n)} \dots n_0$.

A natural number $n$ is called \emph{palindromic} iff $n_j = n_{h(n)-j}$ for $0
\le j \le n(h)$.

By $\Pali$ we denote the set of palindromic natural numbers, i.\,e.
\begin{equation*}
  \Pali = \{0,1,2,3,4,5,6,7,8,9,11,22,33,\dots,99,101,111,121,\dots\}.
\end{equation*}

Until recently, it was not known whether $\Pali$ is an additive basis of $\N$,
i.\,e.\ whether there exists $d \in \N$ such that $\N = d\,\Pali$, where
$d\,\Pali$ denotes the set of sums of $d$ elements of $\Pali$. William~D.~Banks
has in \cite{Banks} given a proof for $\N = 49\Pali$, which leaves still quite
some distance from the commonly conjectured $\N = 3\Pali$. \cite{Friedman}
mentions an even stronger conjecture of John~Hofmann, claiming that every
sufficiently large natural number $n$ is the sum of three elements of $\Pali$
where one of them can be chosen to be the largest or second largest palindromic
natural number $p \le n$. With the palindromic precursor and palindromic
successor
\begin{equation*}
  n_* := \max_{\Pali \ni p < n} p
  \quad\text{and}\quad
  n^* := \min_{\Pali \ni p > n} p\,,
\end{equation*}
and $n_{**} := \left(n_*\right)_*$ for $n \in \N$, the question is:

\begin{center}
  \emph{Is it true that $\{n - n_*, n - n_{**}\} \cap 2\Pali \ne \emptyset$ for
    every sufficiently large $n \in \N \setminus \Pali$?}
\end{center}

We are going to show that the answer is ``no''.

 \section{The Counterexample}

 The counterexample is constructed using 'non-$2\Pali$ twins', the palindromic
 twins $10^a \pm 1$ for suitable $a \in \N$ and the fact that the distance
 between a palindromic number $p$ and its successor $p^*$ can be arbitrarily
 large. As 'non-$2\Pali$ twins' we use the numbers $11 \cdot 10^k + 1$ and $11
 \cdot 10^k + 3$ for even $k$.

\begin{prop}
  $11 \cdot 10^k + 1 \not\in 2\Pali$ for $2 \le k \in \N$.
\end{prop}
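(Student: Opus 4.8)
The plan is to assume, for contradiction, that $N := 11\cdot 10^k + 1 = p + q$ with $p,q \in \Pali$ and $p \ge q$, and to extract a contradiction purely from the decimal layout of $N$. First I would record that $N$ has exactly $k+2$ digits, namely the string $1\,1\,\underbrace{0\cdots0}_{k-1}\,1$ (the block $11$, then $k-1$ zeros since $k \ge 2$, then a final $1$); comparing $N$ with its reversal at positions $k$ and $1$ shows $N \notin \Pali$, so the degenerate split $q = 0$, $p = N$ is impossible and I may assume $q \ge 1$. A crude estimate $p \ge N/2 > 5\cdot 10^k$ forces $p$ to have at least $k+1$ digits, while $p \le N < 10^{k+2}$ allows at most $k+2$, leaving two cases.

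When $p$ has $k+2$ digits its leading digit must be $1$, and $p \le N$ forces its second digit to be $0$ or $1$; since $p$ is palindromic this fixes its last two digits, so that $q = N - p$ is congruent to $0$ or $90 \pmod{100}$. In either event $q$ ends in the digit $0$, which is impossible for a positive palindrome (its leading digit would vanish). I expect this case to be short.

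The main case, and the main obstacle, is when $p$ and $q$ both have exactly $k+1$ digits (note $q = N - p \ge 10^k + 2$ already forces this once $p$ has $k+1$ digits); here I would run a careful carry analysis. Writing $c_j$ for the carry into position $j$ and exploiting the palindrome symmetries $p_j = p_{k-j}$ and $q_j = q_{k-j}$, the units equation, the top equation at position $k$ (whose carry-out is the leading $1$ of $N$), and the relations $p_0 = p_k$, $q_0 = q_k$ together pin down $p_0 + q_0 = 11$, $c_1 = 1$, and $c_k = 0$. Feeding these into the two symmetric interior positions $1$ and $k-1$, where $N$ carries the digit $0$, yields $p_1 + q_1 = 9$ from position $1$ but $p_{k-1} + q_{k-1} = 0$ from position $k-1$; since $p_1 = p_{k-1}$ and $q_1 = q_{k-1}$ these contradict each other. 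The delicate points throughout are the bookkeeping of the carries $c_j \in \{0,1\}$ and the degenerate subcase $k = 2$, in which positions $1$ and $k-1$ collapse onto the central digit; there the lone position-$1$ equation $p_1 + q_1 + c_1 = 10\,c_2$ with $c_1 = 1$ and $c_2 = c_k = 0$ is already absurd, so the argument closes uniformly for every $k \ge 2$.
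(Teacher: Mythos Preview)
Your proof is correct and follows essentially the same route as the paper: a case split on the digit length of the larger summand, followed by a carry analysis in the critical case where both summands have $k+1$ digits. The only noteworthy difference is in that main case: the paper propagates the carry through every position $1,\dots,k-1$ to force $p_j+q_j=9$ all the way up and then overshoots at position $k$, whereas you pin down $c_1=1$ and $c_k=0$ from the two ends and immediately confront positions $1$ and $k-1$ via the palindrome symmetry $p_1=p_{k-1}$, $q_1=q_{k-1}$ --- a slightly cleaner shortcut that avoids the intermediate digits entirely.
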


\begin{proof}
  For $t := 11 \cdot 10^k + 1$ we have $h(t) = k+1$. Suppose $t = p+q$ with
  $p,q \in \Pali$ and $p \le q$, so $h(p) \le h(q) \le k+1$. Because $t \not\in
  \Pali$, we have $p > 0$.

  \withbox{(a)}{(a)} Suppose $h(q) = k+1$. Then $q_{k+1} = 1$, so $q_0 = 1$,
  so $p_0 = 0$, which is not possible.

  \withbox{(b)}{(a)} Suppose $h(p) = h(q) = k$. Because $t_0 = 1$ and $p_0,
  q_0 \neq 0$, we need $p_0 + q_0 = 11$, so $1$ is carried to the tens
  positions, and as this must add to $10$ with $p_1 + q_1$, we get $p_1 + q_1 =
  9$, and a $1$ is carried to the hundreds position. This goes on up to
  $p_{k-1} + q_{k-1} = 9$ and a carry to position $k$. But then $p+q \ge (p_k +
  q_k + 1) \cdot 10 ^k = (p_0 + q_0 + 1) \cdot 10^k = 12 \cdot 10^k > t$.

  \withbox{(c)}{(a)} Suppose $h(p) < h(q) = k$. Then $p+q \le (10^k-1) +
  (10^{k+1}-1) = 11 \cdot 10^k - 2 < t$.

  \withbox{(d)}{(a)} Suppose $h(p) \le h(q) < k$. Then $p+q \le (10^k-1) +
  (10^k-1) = 2 \cdot 10^k - 2 < t$.
\end{proof}

\begin{prop}
  $11 \cdot 10^k + 3 \not\in 2\Pali$ for $2 \le k \in \N$, $k$ even.
\end{prop}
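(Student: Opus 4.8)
The plan is to mirror the structure of the preceding proof for $t := 11 \cdot 10^k + 3$, which again has $h(t) = k+1$ and is not palindromic (its leading digit is $1$, its units digit $3$), so that in any representation $t = p+q$ with $p,q \in \Pali$, $p \le q$, one has $p > 0$ and $h(p) \le h(q) \le k+1$. I split into the same four cases according to the value of $h(q)$. Cases (c) $h(p) < h(q) = k$ and (d) $h(q) < k$ are unchanged, since the bounds used there give $p+q \le 11\cdot 10^k - 2 < t$ and $p + q \le 2\cdot 10^k - 2 < t$ respectively, exactly as before. For case (b) $h(p) = h(q) = k$ the units position gives $p_0 + q_0 \equiv 3 \pmod{10}$ with no incoming carry, hence $p_0 + q_0 \in \{3, 13\}$; while the leading position $k$ must emit the carry that produces $t_{k+1} = 1$, so $p_k + q_k + c_k = 11$, where $c_k \in \{0,1\}$ is the incoming carry. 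Using the palindrome identities $p_0 = p_k$ and $q_0 = q_k$ this forces $p_0 + q_0 = 11 - c_k \in \{10, 11\}$, which is disjoint from $\{3, 13\}$ --- a contradiction. None of these three cases needs $k$ to be even.

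The whole difficulty sits in case (a) $h(q) = k+1$. Here $q_{k+1} = q_0$ is the leading digit, and $q \le t < 2\cdot 10^{k+1}$ forces $q_{k+1} = q_0 = 1$; the units position then gives $p_0 = 2$. If $q_k = 1$ then $q_1 = q_k = 1$ by symmetry and $q \ge 11\cdot 10^k + 11 > t$, so $q_k = q_1 = 0$. Since $q \ge 10^{k+1}$ we get $p = t - q \le 10^k + 3$, and because $p$ is a palindrome with $p_0 = 2$ its leading digit is $2$, ruling out $h(p) = k$ (which would give $p \ge 2\cdot 10^k$); thus $h(p) \le k-1$. Writing $M := \sum_{j=2}^{k-1} q_j 10^j$ for the middle block of $q$, the symmetry of $q$ says $q_j = q_{k+1-j}$, i.e.\ $M$ is centrally symmetric about the half-integer axis $(k+1)/2$, and the representation collapses to the single equation $p = 10^k + 2 - M$ with $p$ a palindrome.

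This is exactly where the parity of $k$ enters, and I expect it to be the main obstacle. A clean first step: since $h(q) = k+1$, the number $q$ has $k+2$ digits, which is even, so the palindrome $q$ is divisible by $11$; as $t \equiv 3 \pmod{11}$ this forces $p \equiv 3 \pmod{11}$, and since even-length palindromes are multiples of $11$, $p$ must have an odd number of digits, i.e.\ $h(p)$ is even. Combined with the borrow pattern of $10^k + 2 - M$ this pins the low digits of $p$ to $p_0 = 2$, $p_1 = 0$, $p_2 = 1$ and forces the outermost middle digits $q_2 = q_{k-1} = 9$. It then remains to show that no such odd-length palindrome $p$ exists. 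The mechanism is that $p$ carries two reflection symmetries with different centres --- its own palindromic centre $h(p)/2$ and the centre $(k+1)/2$ inherited from the central symmetry of $M$ through $p_j = 9 - q_j$ --- whose composition is a nontrivial translation; this forces the digits of $p$ into a periodic pattern incompatible with the anchored values $p_0 = 2,\ p_1 = 0,\ p_2 = 1$. The decisive point is that for even $k$ the axis $(k+1)/2$ is a half-integer, so the two symmetries share no fixed digit and the periodicity collides; for odd $k$ the common integer centre absorbs a free middle digit and the obstruction disappears, which is precisely why representations such as $11003 = 202 + 10801$ exist and the hypothesis ``$k$ even'' cannot be dropped. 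Making this final digit-chase fully rigorous and uniform in $k$ is the crux of the proof.
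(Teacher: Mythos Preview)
Your handling of cases (b)--(d) is correct and essentially identical to the paper's. In case~(a) your initial deductions are also right: $q_0=q_{k+1}=1$, $q_1=q_k=0$, $p_0=2$, $h(p)\le k-1$. The divisibility-by-$11$ observation---$q$ has even length $k+2$, hence $11\mid q$, hence $p\equiv 3\pmod{11}$, hence $p$ has odd length and $h(p)$ is even---is a genuine simplification that the paper does \emph{not} use; the paper instead spends subcases (ab), (ac), (adcb) eliminating various short and odd-length $p$ by hand.

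Where your proposal falls short is that you stop at the heuristic ``two reflections with different centres compose to a nontrivial translation, hence periodicity, hence contradiction'' and explicitly leave the digit-chase undone. That is the whole content of case~(a), so as written this is not yet a proof. The mechanism is sound and can be completed: once you observe that the carries $c_j$ are all~$1$ for $3\le j\le k$ (because $c_3=1$ from $p_2+q_2=10$, and $c_{j}=1\Rightarrow c_{j+1}=1$ since $t_j=0$), you get the clean relation $p_j+q_j=9$ on $3\le j\le k-1$; combining $p_j=p_{\ell-j}$ with $q_j=q_{k+1-j}$ then yields $p_j=p_{j+d}$ with odd period $d=k+1-\ell$, and matching this against the anchored block $p_0,p_1,p_2=2,0,1$ and its mirror $p_{\ell-2},p_{\ell-1},p_\ell=1,0,2$ forces $d\mid 4$, impossible for odd $d\ge 3$. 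But none of this is in your text.

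The paper takes a different route in case~(a): after the same setup it performs an \emph{infinite descent}. Writing $q=10\delta\epsilon\alpha\epsilon\delta01$ and $p=20\phi\beta\phi02$ with palindromic inner strings $\alpha,\beta$, it shows (through several subcases keyed on $\phi$ and on whether $\beta$ has one or more digits) that one can strip two symmetric digits from each of $p,q$ to produce $p',q'\in\Pali$ with $p'+q'=11\cdot 10^{k-2}+3$ and $h(q')=k-1$, reducing to the same problem with $k$ replaced by $k-2$. Small $k$ and small $h(p)$ serve as the base of the descent. Your symmetry/periodicity idea, once carried out, would give a more direct and arguably cleaner argument than this descent, but in its present form the proposal is an outline rather than a proof.
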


\begin{proof}
  For $t := 11 \cdot 10^k + 3$ we have $h(t) = k+1$. Suppose $t = p+q$ with
  $p,q \in \Pali$ and $p \le q$, so $h(p) \le h(q) \le k+1$. Because $t \not\in
  \Pali$, we have $p > 0$.

  In the following, for a digit $\alpha$ and $m \in \N$, ${[\alpha]}_m$ denotes
  the concatenation of $m$ copies of $\alpha$.

  \withbox{(a)}{(a)} Suppose $h(q) = k+1$. Then $q_{k+1} = 1$, so $q_0 = 1$,
  so $p_0 = 2$, so $p_{h(p)} = 2$, so $h(p) < k$. A carry is needed from
  position $h(p)$ to position $h(p) + 1$ to get ${(p+q)}_{h(p)} = 0$, and so
  on, up to a carry from position $k-1$ to position $k$. With this carry, we
  would get $p + q > t$ if $q_k > 0$, so $q_k = 0$, so $q_1 = 0$. For $h(p) >
  1$ we get $p_1 = 0$. For $h(p) > 2$ we get $p_{h(p)-1} = 0$.

  \withbox{(aa)}{(aa)} Suppose $k = 2$. Then $q = 1001$ and $p \in \{2,22\}$,
  so $p+q \neq t$.

  \withbox{(ab)}{(aa)} Suppose $k = 4$. Then $q = 10\delta\delta01$ with a
  digit $\delta$ and $p \in \{2,22,202,2002\}$, so $p+q \neq t$.

  \withbox{(ac)}{(aa)} Suppose $k \ge 6 \wedge h(p) \le 5$. Then $q =
  10\delta\epsilon\alpha\epsilon\delta01$ with digits $\delta$ and $\epsilon$
  and a palindromic digit string $\alpha$ which is empty in case of $k = 6$.
  To get ${(p+q)}_k = 1$, $\delta = 9$ is needed, so $q =
  109\epsilon\alpha\epsilon901$ and $p+q \neq t$ for $p \in
  \{2,22,202,2002\}$. For $p = 20\phi02$ with some digit $\phi$, to get
  ${(p+q)}_2 = 0$ we need $\phi = 1$ and $\epsilon = 9$, but then in case of $k
  = 6$ we get $p+q = 20102 + 10999901 = 11020003 \ne t$, while in case of $k >
  6$ we need $\alpha = 7{[9]}_{k-8}7$, so $p+q = 20102 + 10997{[9]}_{k-8}79901
  = 10998{[0]}_{k-8}00003 \neq t$. For $p = 20\phi\phi02$ with some digit
  $\phi$, to get ${(p+q)}_2 = 0$ we need $\phi = 1$ and $\epsilon = 8$, but
  then in case of $k = 6$ we get $p+q = 201102 + 10988901 = 11190003 \neq t$,
  while in case of $k > 6$ we have
  \begin{equation*}
    p+q < 10^6 + 1099 \cdot 10^{k-2} \le 10^{k-2} + 1099 \cdot 10^{k-2} = 11
    \cdot 10^k < t.
  \end{equation*}
  \withbox{(ad)}{(aa)} Suppose $k \ge 8 \wedge h(p) \ge 6$. Then $q =
  10\delta\epsilon\alpha\epsilon\delta01$ and $p = 20\phi\beta\phi02$ with
  digits $\delta,\epsilon,\phi$ and non-empty palindromic digit strings
  $\alpha,\beta$. We will construct $p',q' \in \Pali, p' \le q'$ with $h(q') =
  k-1$ and $p'+q' = 11 \cdot 10^{k-2} + 3$, which gives rise to an impossible
  infinite descent.

  \withbox{(ada)}{(ada)} Suppose $\phi = 0$. Then $\delta = 0$, hence $q =
  100\epsilon\alpha\epsilon001$ and $p = 200\beta002$, and we can take $q' :=
  10\epsilon\alpha\epsilon01$ and $p' := 20\beta02$.

  \withbox{(adb)}{(ada)} Suppose $\phi \neq 0$ and $h(p) = k-1$. We have $\phi
  + \delta = 10$ and $\delta \neq 0$, and $\beta$ must have at least two
  digits, i.\,e.\ $\beta = \psi\gamma\psi$ with a digit $\psi$ and a (possibly
  empty) palindromic digit string $\gamma$, so $p =
  20\phi\psi\gamma\psi\phi02$, which allows to take $q' :=
  10\delta\alpha\delta01$ and $p' := 20\phi\gamma\phi02$.

  \withbox{(adc)}{(ada)} Suppose $\phi \neq 0$ and $h(p) < k-1$. We have $\phi
  + \delta = 10$, and $h(p) < k-1$ leads to $\delta = 9$ and $\phi = 1$, so $q
  = 109\epsilon\alpha\epsilon901$ and $p = 201\beta102$.

  \withbox{(adca)}{(adca)} Suppose $\beta$ is more than one digit,
  i.\,e.\ $\beta = \psi\gamma\psi$ with a digit $\psi$ and a (possibly empty)
  palindromic digit string $\gamma$, hence $p = 201\psi\gamma\psi102$. Then we
  take $q' := 109\alpha901$ and $p' := 201\gamma102$.

  \withbox{(adcb)}{(adca)} Suppose $\beta$ is a single digit. As $k$ is even,
  $\alpha$ has an even number of digits. If $\alpha$ were two digits, say
  $\alpha = \tau\tau$ with a digit $\tau$, so $q =
  109\epsilon\tau\tau\epsilon901$, we would need $\tau = 8$ for the lower
  position, but $\tau = 9$ for the higher position of $\tau$. If $\alpha$ were
  more than two digits, say $\alpha = \tau\rho\tau$ with a digit $\tau$ and a
  palindromic digit string $\rho$ with $2$ or more digits, so $q =
  109\epsilon\tau\rho\tau\epsilon901$, we would again need $\tau = 8$ for the
  lower position, but $\tau = 9$ for the higher position of $\tau$. So the case
  (adcb) is not possible at all.

  \withbox{(b)}{(a)} Suppose $h(p) = h(q) = k$. Then $p_0 + q_0 = 3, p_k + q_k
  \in \{10,11\}$, but $p_k = p_0, q_k = q_0$.

  \withbox{(c)}{(a)} Suppose $h(p) < h(q) = k$. Then $p+q \le (10^k-1) +
  (10^{k+1}-1) = 11 \cdot 10^k - 2 < t$.

  \withbox{(d)}{(a)} Suppose $h(p) \le h(q) < k$. Then $p+q \le (10^k-1) +
  (10^k-1) = 2 \cdot 10^k - 2 < t$.
\end{proof}

\begin{prop}
  There are infinitely many $n \in \N \setminus \Pali$ with $n-n_*,n-n_{**}
  \not\in 2\Pali$.
\end{prop}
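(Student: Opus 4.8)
The plan is to realise the two ``non-$2\Pali$ twins'' $11\cdot10^k+1$ and $11\cdot10^k+3$ of the preceding two propositions \emph{simultaneously} as $n-n_*$ and $n-n_{**}$. Fix an even $k\ge 2$, put $a:=2k+3$, and set
\[
  n := 10^a + 11\cdot10^k + 2 .
\]
The roles of $n_{**}$ and $n_*$ will be played by the palindromic twins $10^a-1=\underbrace{9\cdots9}_{a}$ and $10^a+1$, which are consecutive elements of $\Pali$ because the only integer between them is $10^a\notin\Pali$. Granting $n_*=10^a+1$ and $n_{**}=10^a-1$, a one-line computation gives $n-n_*=11\cdot10^k+1$ and $n-n_{**}=11\cdot10^k+3$, so the preceding two propositions---the second of which is where the evenness of $k$ enters---show at once that $n-n_*,n-n_{**}\notin 2\Pali$. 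That $n\notin\Pali$ is clear, since its leading digit is $1$ while its units digit is $2$.

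The step I expect to be the main obstacle is the verification that $10^a+1$ is indeed the \emph{largest} palindrome below $n$, i.e.\ that the gap from $10^a+1$ up to the next palindrome exceeds $n-(10^a+1)=11\cdot10^k+1$. I would prove this directly on the digit string rather than by computing the successor. Suppose $p\in\Pali$ with $10^a+1<p<n$. As $n<2\cdot10^a$, the number $p$ has exactly $a+1$ digits with $p_a=1$, whence $p_0=1$ by palindromy. Moreover $p-10^a<11\cdot10^k+2<10^{k+2}$, so writing $p=10^a+s$ with $0<s<10^{k+2}$ shows $p_j=0$ for $k+2\le j\le a-1$. Since $a=2k+3$ forces $a-j\ge k+2$ for every $1\le j\le k+1$, the relation $p_j=p_{a-j}$ then also gives $p_j=0$ for $1\le j\le k+1$. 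Hence all digits of $p$ vanish except $p_0=p_a=1$, i.e.\ $p=10^a+1$, a contradiction. Thus no palindrome lies strictly between $10^a+1$ and $n$, so $n_*=10^a+1$; and since every palindrome below $10^a+1$ is at most $10^a-1$ (because $10^a\notin\Pali$), we get $n_{**}=10^a-1$.

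Finally, as $k$ ranges over the even integers $\ge 2$ the corresponding values $n=10^{2k+3}+11\cdot10^k+2$ tend to infinity and are in particular pairwise distinct, which yields infinitely many $n\in\N\setminus\Pali$ with $n-n_*,n-n_{**}\notin 2\Pali$. Everything outside the gap estimate is routine bookkeeping with digit positions; accordingly I would isolate the ``no palindrome strictly between $10^a+1$ and $n$'' claim as the one substantial point and keep its proof purely combinatorial, exactly as sketched above.
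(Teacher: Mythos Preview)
Your argument is correct and follows essentially the same construction as the paper: set $n=(10^{a}+1)+11\cdot 10^{k}+1$ so that $n_*=10^{a}+1$ and $n_{**}=10^{a}-1$, whence $n-n_*$ and $n-n_{**}$ are the ``non-$2\Pali$ twins'' of the two preceding propositions. The only cosmetic differences are that you take the odd exponent $a=2k+3$ whereas the paper takes the even exponent $a=2m$ with $10^{m}>11\cdot10^{k}+1$, and that you verify $n_*=10^{a}+1$ by a direct digit argument while the paper simply quotes the successor $(10^{2m}+1)^*=10^{2m}+10^{m}+1$.
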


\begin{proof}
  Let $1 \le j \in \N$. Then for $t := 11 \cdot 10^{2j} + 1$, propositions 1
  and 2 show $t,t+2 \not\in 2\Pali$. Take $m \in \N$ with $10^m > t$ and set $p
  := 10^{2m} + 1 \in \Pali$. Then $p^* = 10^{2m} + 10^m + 1 = p + 10^m$ and
  $p_* = 10^{2m} - 1 = p-2$. For $n := p+t$ we have $p < n < p+10^m = p^*$, so
  $n \not\in \Pali$ and $n_* = p$, hence $n - n_* = n - p = t \not\in 2\Pali$
  and $n - n_{**} = n - p_* = n - (p-2) = t+2 \not\in 2\Pali$.

  In this way, for every $j \ge 1$ choose an $m(j)$ and get an $n(j)$ with the
  desired properties. Taking $m(j+1) > m(j)$ gives $n(j+1) > n(j)$.
\end{proof}

Choosing the smallest possible $m$ with $10^m > 11 \cdot 10^{2j} + 1$, namely
$m = 2j + 2$, in the proof of proposition 3 yields $n(j) = 10\,000^{j+1} + 11
\cdot 100^j + 2$.

On a related note, we would like to point out that the greedy algorithm which,
given a natural number, repeatedly subtracts the largest possible palindromic
number, can result in an arbitrarily large number of palindromic summands:
Start with $n(1) := 1$. To get $n(j+1)$, take $m \in \N$ with $10^{m} > n(j)$
and set $n(j+1) := 10^{2m} + 1 + n(j)$. Then $n(j+1) \not\in \Pali$ and
${n(j+1)}_* = 10^{2m} + 1$, so $n(j+1) - {n(j+1)}_* = n(j)$. For every $j \in
\N$, the greedy algorithm partitions $n(j)$ into $j$ palindromic summands.
Consequently, and in confirmation of a recent presumption of Neil~Sloane
\cite{OEIS}, the OEIS sequence A088601 is unbounded.


\begin{thebibliography}{1}

\bibitem{Banks}
  W.~D.~Banks,
  Every natural number is the sum of forty-nine palindromes,
  \emph{arXiv:1508.04721},
  \href{http://arxiv.org/abs/1508.04721}{\url{http://arxiv.org/abs/1508.04721}}.

\bibitem{Friedman}
  E.~Friedman,
  Problem of the Month (June 1999),
  \href{http://www2.stetson.edu/~friedma/mathmagic/0699.html}{\url{http://www2.stetson.edu/\~efriedma/mathmagic/0699.html}}.

\bibitem{OEIS}
  N.~J.~A.~Sloane,
  The On-Line Encyclopedia of Integer Sequences, A088601,
  \href{http://oeis.org/A088601}{\url{http://oeis.org/A088601}}.

\end{thebibliography}
\end{document}